\documentclass[12pt,a4paper,reqno]{amsart}
\usepackage{lineno}
\allowdisplaybreaks
\usepackage{amsmath}
\usepackage{amssymb}
\usepackage{amsfonts}
\usepackage{amsthm,latexsym,enumerate,url,cases}
\numberwithin{equation}{section}
\usepackage{mathrsfs}
\usepackage{array,longtable}

\usepackage{xcolor}
\definecolor{cite}{rgb}{0.00,0.60,1.00}
\definecolor{url}{rgb}{1.00,0.10,0.80}
\definecolor{link}{rgb}{0.00,0.00,1.00}
\usepackage[colorlinks,linkcolor=link,urlcolor=url,citecolor=cite,breaklinks]{hyperref}

\hypersetup{
	pdfstartpage=1,
	pdfstartview=FitH}
\def\N{{\mathbb N}}

\theoremstyle{plain}
\newtheorem{theorem}{Theorem}

\newtheorem{question}{Question}
\newtheorem{corollary}{Corollary}
\newtheorem{proposition}{Proposition}

\theoremstyle{definition}

\usepackage{etoolbox}
\makeatletter
\patchcmd{\@settitle}{\uppercasenonmath\@title}{}{}{}
\patchcmd{\@setauthors}{\MakeUppercase}{}{}{}
\patchcmd{\section}{\scshape}{}{}{}
\makeatother

\begin{document}

\title
[Restricted partition functions and additive complements]
{Restricted partition functions and additive complements}

\author
[Y. Ding] {Yuchen Ding}

\address{(Yuchen Ding$^{1,2}$) $^1$School of Mathematics, Yangzhou University, Yangzhou 225002, People's Republic of China}
\address{$^2$HUN-REN Alfr\'ed R\'enyi Institute of Mathematics, Budapest, Pf. 127, H-1364 Hungary}
\email{ycding@yzu.edu.cn}

\keywords{restricted partition function, polynomial growth, additive number theory}
\subjclass[2020]{11B34, 11P81, 05A17}

\begin{abstract}
Let $\N$ be the set of positive integers. For subsets $\mathcal{A},\mathcal{M}\subseteq \N$ and $n\in \N$, let $p(n,\mathcal{A},\mathcal{M})$ denote the number of representations of $n$ in the form
$$
n=\sum_{a\in \mathcal{A}}m_a a,
$$
where $m_a\in \mathcal{M}\cup \{0\}$ for all $a\in \mathcal{A}$, and only finitely many $m_a$ are nonzero. We prove that there exist two infinite sets $\mathcal{A}=\{a_n\}_{n=1}^{\infty}$ and $\mathcal{M}$ of positive integers such that
$$
\lim_{n\to\infty}\frac{\log a_{n+1}-\log a_n}{\log n}=+\infty,
$$
$p(n,\mathcal{A},\mathcal{M})>0$ for every $n\in\N$, and $p$ has polynomial growth. More generally, we prove a construction that associates restricted partition functions of polynomial growth with additive complements satisfying a simple counting condition. This answers a 2016 question of Dai and Chen in the affirmative.
\end{abstract}
\maketitle

Let $\N$ be the set of positive integers and put $\N_0=\N\cup\{0\}$. For $n\in \N$ and given subsets $\mathcal{A},\mathcal{M}$ of $\N$, Canfield and Wilf \cite{CW12} introduced the restricted integer partition function $p(n,\mathcal{A},\mathcal{M})$, defined as the number of representations of $n$ in the form
$
n=\sum_{a\in \mathcal{A}}m_a a,
$
where $m_a\in \mathcal{M}\cup \{0\}$ for all $a\in \mathcal{A}$, and only finitely many $m_a$ are nonzero. The function $p=p(n,\mathcal{A},\mathcal{M})$ is said to have polynomial growth if there exist a constant $c>0$ and an integer $n_0$ such that $p(n,\mathcal{A},\mathcal{M})\le n^c$ for all $n\ge n_0$.

Answering a question of Canfield and Wilf \cite{CW12} in a stronger form, Alon \cite{Al12} proved that there are two explicit infinite sets $\mathcal{A}$ and $\mathcal{M}$ of positive integers such that $p(n, \mathcal{A}, \mathcal{M})=1$ for all $n\ge 1$. For $\mathcal{A}=\{n!\}_{n\ge 1}$ or for $\mathcal{A}=\{n^n\}_{n\ge 1}$, Alon further proved that there exist $n_0$ and an infinite set $\mathcal{M}$ of positive integers such that 
$$
0<p(n, \mathcal{A}, \mathcal{M})<n^{8+o(1)}
$$ 
for all $n>n_0$, thereby answering two further problems of Ljuji\'c and Nathanson \cite{LN12} in the affirmative. In 2016, via an elementary inductive argument, Dai and Chen \cite{DC16} improved Alon's exponent from $8+o(1)$ to $2+o(1)$. More generally, Dai and Chen obtained a unified result for subsets $\mathcal{A}=\{1=a_1<a_2<\cdots\}$ satisfying
$$
c_1(n+1)^{\theta_1}a_n\le a_{n+1}\le c_2(n+1)^{\theta_2}a_n,\quad n=1,2,3,\ldots,
$$
where $c_2>c_1>0$ and $\theta_2\ge \theta_1>0$ are constants. This led them to the following interesting question \cite[Question 3.5]{DC16}.

\begin{question}[Dai--Chen]\label{question1}
Do there exist two infinite sets $\mathcal{A}=\{a_n\}_{n=1}^{\infty}$ and $\mathcal{M}$ of positive integers such that
$$
\lim_{n\to\infty}\frac{\log a_{n+1}-\log a_n}{\log n}=+\infty,
$$
$p(n,\mathcal{A},\mathcal{M})>0$ for all sufficiently large $n$, and $p$ has polynomial growth?
\end{question}

In this note, we show that the answer to Question \ref{question1} is affirmative. Before proving this corollary, we establish a more general result that relates additive complements to Question \ref{question1}. Let $\mathscr{B}$ and $\mathscr{S}$ be subsets of $\N_0$. We call $\mathscr{B}$ and $\mathscr{S}$ {\it complete additive complements} if every nonnegative integer can be represented as the sum of an element of $\mathscr{B}$ and an element of $\mathscr{S}$, i.e., $\mathscr{B}+\mathscr{S}=\N_0$.
For a set $\mathcal{Y}\subseteq\N_0$, let $\mathcal{Y}(x)$ denote the number of elements of $\mathcal{Y}$ not exceeding $x$. 
Our main result is stated as follows.

\begin{theorem}\label{main-thm}
Let $\mathscr{B}$ and $\mathscr{S}$ be complete additive complements satisfying 
$$
\mathscr{B}(x)\mathscr{S}(x)\ll x.
$$
Suppose that
$$
\mathcal{A}=\big\{2^b: b\in \mathscr{B}\big\} \quad \text{and} \quad \mathcal{M}=\bigg\{\sum_{d\in \mathcal{D}} 2^{d}: \mathcal{D}\subset \mathscr{S},\ 0<|\mathcal{D}|<\infty\bigg\}.
$$
Then $p(n, \mathcal{A}, \mathcal{M})>0$ for every $n\in \N$ and $p$ has polynomial growth.
\end{theorem}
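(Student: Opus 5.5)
The plan is to prove the two assertions separately, both by translating a representation $n=\sum_{a\in\mathcal{A}}m_a a$ into a finite set of pairs $(b,s)\in\mathscr{B}\times\mathscr{S}$. Existence comes from the binary expansion of $n$, and the polynomial upper bound from a crude count of such pair-sets, which uses the hypothesis $\mathscr{B}(x)\mathscr{S}(x)\ll x$.

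\emph{Positivity.} Fix $n\in\N$ and write its binary expansion $n=\sum_{k\in K}2^k$, where $K\subset\N_0$ is finite and nonempty.
\begin{enumerate}
\item Since $\mathscr{B}+\mathscr{S}=\N_0$, for each $k\in K$ choose one decomposition $k=b_k+s_k$ with $b_k\in\mathscr{B}$ and $s_k\in\mathscr{S}$.
\item For each $b\in\mathscr{B}$ put $\mathcal{D}_b=\{s_k: k\in K,\ b_k=b\}$.
\item If $k\neq k'$ and $b_k=b_{k'}$, then $s_k\neq s_{k'}$ because $k\neq k'$. Hence $\mathcal{D}_b$ has exactly as many elements as there are $k$ with $b_k=b$, with no repetitions.
\item Set $m_{2^b}=\sum_{d\in\mathcal{D}_b}2^d$. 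This lies in $\mathcal{M}$ whenever $\mathcal{D}_b\neq\emptyset$, and equals $0$ otherwise.
\item Then $\sum_b m_{2^b}2^b=\sum_{k\in K}2^{b_k+s_k}=n$, so $p(n,\mathcal{A},\mathcal{M})\ge1$.
\end{enumerate}

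\emph{Upper bound.} Take any representation $n=\sum_{b\in\mathscr{B}}m_{2^b}2^b$ with $m_{2^b}\in\mathcal{M}\cup\{0\}$. Each nonzero $m_{2^b}$ equals $\sum_{d\in\mathcal{D}_b}2^d$ for a unique nonempty finite set $\mathcal{D}_b\subset\mathscr{S}$, by uniqueness of binary expansions. So the representation determines the finite set
$$
P=\{(b,s): b\in\mathscr{B},\ s\in\mathcal{D}_b\}\subset\mathscr{B}\times\mathscr{S},
$$
and this map from representations to sets $P$ is injective. Moreover $\sum_{(b,s)\in P}2^{b+s}=n$. Every term of this sum is at most $n$, so every pair in $P$ satisfies $b\le L$ and $s\le L$, where $L=\log_2 n$. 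Consequently
$$
P\subseteq(\mathscr{B}\cap[0,L])\times(\mathscr{S}\cap[0,L]),
$$
a set of size $\mathscr{B}(L)\mathscr{S}(L)\le CL$ for $n\ge2$, by the hypothesis. Therefore
$$
p(n,\mathcal{A},\mathcal{M})\le 2^{CL}=n^{C},
$$
which is polynomial growth.

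The only points needing care are that $0$ may lie in $\mathscr{B}$ or $\mathscr{S}$, which is harmless since $2^0=1\in\mathcal{A}$, and that the implied constant in $\mathscr{B}(x)\mathscr{S}(x)\ll x$ is applied at $x=L\ge1$. I do not expect a real obstacle. The step that needs the most care is the injectivity of representation $\mapsto P$, which rests on the uniqueness of binary expansions of the multipliers $m_{2^b}$. The crude bound $2^{|\text{pairs}|}$ already suffices, so no finer counting of the pairs by their sum $b+s$ should be needed.
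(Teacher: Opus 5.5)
Your proposal is correct and follows the paper's proof essentially step for step. Positivity comes from the binary expansion of $n$, split along $\mathscr{B}+\mathscr{S}=\N_0$. The upper bound comes from the injective map from representations to finite sets of pairs $(b,s)$ with $b+s\le\log_2 n$, whose number is at most $\mathscr{B}(L)\mathscr{S}(L)\ll L$; the paper first counts pairs with $s+b\le N$ and then bounds this by the same product.
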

\begin{proof}
By the hypothesis of the theorem we have
\begin{align}\label{eq-1}
\mathscr{B}+\mathscr{S}=\mathbb{N}_0.
\end{align}
We first show that $p(n, \mathcal{A}, \mathcal{M})>0$ for every $n\in \N$. By the binary expansion of $n$, there is a finite nonempty set $\mathcal{J}\subset \N_0$ such that
$
n=\sum_{j\in \mathcal{J}}2^j.
$
By \eqref{eq-1}, each $j\in\mathcal{J}$ can be written as $j=s_j+b_j$, where $s_j\in \mathscr{S}$ and $b_j\in \mathscr{B}$. Thus
\begin{align}\label{eq-2}
n=\sum_{j\in \mathcal{J}}2^{s_j}2^{b_j}.
\end{align}
Combining the terms in \eqref{eq-2} with the same value of $b_j$, we get
\begin{align*}
n=\sum_{b\in\mathscr{B}} \Big(\sum_{j\in\mathcal{J}:\ b_j=b} 2^{s_j}\Big)2^b=
\sum_{a\in \mathcal{A}}m_a a.
\end{align*}
For each fixed $b$, the corresponding $s_j$ are distinct, because the $j$ are distinct. Hence each nonzero inner sum belongs to $\mathcal{M}$. Therefore $m_a\in\mathcal{M}\cup\{0\}$ for every $a\in\mathcal{A}$, and only finitely many $m_a$ are nonzero.

It remains to show that $p$ has polynomial growth. Let $n$ be sufficiently large and put $N=\log n/\log 2$. Take an arbitrary representation counted by $p(n,\mathcal{A},\mathcal{M})$ as
\begin{align}\label{eq-3}
n=\sum_{b\in\mathscr{B}}m_{2^b}2^b.
\end{align}
For each $b$ with $m_{2^b}\ne0$, the uniqueness of binary expansion gives a unique finite nonempty set $\mathcal{D}_b\subset\mathscr{S}$ such that
$$
m_{2^b}=\sum_{s\in\mathcal{D}_b}2^s.
$$
Thus, the representation (\ref{eq-3}) determines a finite set
$$
\mathcal{P}=\big\{(s,b): b\in\mathscr{B},\ s\in\mathcal{D}_b\big\}\subseteq \mathscr{S}\times\mathscr{B}
$$
such that
$$
n=\sum_{(s,b)\in\mathcal{P}}2^{s+b}.
$$
Since all summands are positive, each pair in $\mathcal{P}$ satisfies $s+b\le N$. Hence the map from representations to such sets $\mathcal{P}$ is injective, and so
$$
p(n,\mathcal{A},\mathcal{M})\le 2^{f(N)},
$$
where
$$
f(N)=\#\big\{(s,b)\in\mathscr{S}\times\mathscr{B}:s+b\le N\big\}.
$$
Therefore, we have
\begin{align}\label{eq-4}
f(N)\le \mathscr{S}(N)\mathscr{B}(N)
\le c_0 N=\frac{c_0}{\log 2} \log n,
\end{align}
where $c_0>0$ is a constant. It then follows from \eqref{eq-4} that
$$
p(n,\mathcal{A},\mathcal{M})\le 2^{\frac{c_0}{\log 2}\log n}\le n^{c_0},
$$
completing the proof that $p$ has polynomial growth.
\end{proof}

Now, we turn back to Question \ref{question1}. We shall use the following result of Ruzsa \cite{Ru72} on additive complements of powers of two.

\begin{proposition}[Ruzsa, 1972]\label{lem1}
There is a subset $\mathcal{S}\subset \mathbb{N}$ with $\mathcal{S}(x)\ll x/\log x$ such that every sufficiently large integer is of the form $2^k+s$, where $k\in \mathbb{N}$ and $s\in \mathcal{S}$.
\end{proposition}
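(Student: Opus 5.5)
We prove Proposition~\ref{lem1} by an explicit construction: we cover each dyadic block separately by a union of two residue classes modulo a power of $3$. The key input is the elementary fact that $2$ is a primitive root modulo $3^j$ for every $j\ge1$, i.e.\ $\operatorname{ord}_{3^j}(2)=\varphi(3^j)=2\cdot 3^{j-1}$. This holds because $2$ is a primitive root mod $9$, and that lifts to all $3^j$ by the standard lifting argument. Consequently, as $k$ runs over any $2\cdot3^{j-1}$ consecutive integers, $2^k$ runs over all units modulo $3^j$.

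\textbf{Construction.} For $m\ge 3$, consider the block $I_m=[2^m,2^{m+1})$. Let $j=j(m)$ be the largest integer with $2\cdot 3^{j-1}\le m-1$, so that $3^{j}\gg m$. Put
$$
\mathcal{S}_m=\{s\in[1,2^{m+1}): s\equiv 0 \text{ or } 1 \pmod{3^{j}}\},
$$
and take $\mathcal{S}=\bigcup_{m\ge m_0}\mathcal{S}_m$.

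\textbf{Covering.} Fix $n\in I_m$. We choose a target residue $c\in\{0,1\}$ so that $n-c$ is coprime to $3$. If $3\nmid n$, take $c=0$. If $3\mid n$, take $c=1$, since then $n-1\equiv 2\pmod 3$.

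Since $k$ ranges over the $m-1\ge 2\cdot3^{j-1}$ consecutive values $1,\dots,m-1$, the primitive-root property gives some $k$ in this range with $2^k\equiv n-c\pmod{3^j}$. Set $s=n-2^k$. Then $s\equiv c\pmod{3^j}$. Moreover $s\ge 2^m-2^{m-1}>0$ and $s<2^{m+1}$, so $s\in\mathcal{S}_m$. Hence every $n\ge 2^{m_0}$ has the form $2^k+s$ with $k\in\N$ and $s\in\mathcal{S}$.

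\textbf{Counting.} We have $|\mathcal{S}_m|\le 2\cdot 2^{m+1}/3^{j}+2\ll 2^m/m$. For $x$ large, let $M=\lfloor\log_2 x\rfloor$. The elements of $\mathcal{S}$ not exceeding $x$ come from $\mathcal{S}_m$ with $m\le M$, and also from the parts of the sets $\mathcal{S}_m$ with $m>M$ lying in $[1,x]$. Because the residue classes are periodic, each such part has size $\ll x/m+2\ll x/\log x$. However, there are infinitely many $m>M$, so this tail needs more care.

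\textbf{Main obstacle and its fix.} The issue is that a fixed small $s$ would be counted for every large $m$. To avoid this, I would restrict $\mathcal{S}_m$ to $s\in[2^{m-1},2^{m+1})$. This loses nothing, because the $s$ produced in the covering step satisfies $s>2^{m-1}$. With this restriction, only $m\le M+1$ contribute to $\mathcal{S}(x)$. The geometric sum then gives
$$
\mathcal{S}(x)\ll \sum_{m\le M+1} 2^m/m \ll x/\log x,
$$
since the terms grow geometrically and the sum is dominated by its last few terms.

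The only genuinely nontrivial ingredient is the primitive-root property of $2$ modulo $3^j$. Its role is to sidestep Artin-type issues that a prime modulus would raise. The two-residue trick handles the non-units.
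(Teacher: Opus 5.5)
Your proof is correct, and it takes a genuinely different route from the paper. The paper does not prove Proposition~\ref{lem1}. It imports it as a black box from Ruzsa~\cite{Ru72}, and uses it only in the proof of Corollary~\ref{corollary}: after adjoining finitely many elements, it obtains complete complements $\mathcal{B},\mathcal{S}$ with $\mathcal{B}(x)\mathcal{S}(x)\ll x$.

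You replace the citation by a self-contained elementary construction. Since $2$ has order $\varphi(3^j)=2\cdot 3^{j-1}$ modulo $3^j$ and $3^{j(m)}\asymp m$, the exponents $1\le k\le m-1$ already hit every unit class modulo $3^{j(m)}$. The shift $c\in\{0,1\}$ then handles $n\equiv 0\pmod 3$. All the checks go through: $k\in\N$, $2^{m-1}\le s<2^{m+1}$, $|\mathcal{S}_m|\ll 2^m/m$, and the dyadic summation $\sum_{m\le M+1}2^m/m\ll x/\log x$. Your approach buys an explicit $\mathcal{S}$, namely two residue classes on each dyadic block, and makes the note fully self-contained. The citation buys brevity.

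Two minor remarks:
\begin{enumerate}
\item The covering step gives $s\ge 2^{m-1}$, with equality for $n=2^m$ and $k=m-1$, rather than $s>2^{m-1}$. This is harmless because your restricted interval $[2^{m-1},2^{m+1})$ is closed on the left.
\item The tail problem you worried about does not arise even without the restriction. The classes $0$ and $1$ modulo $3^{j}$ are nested as $j$ grows, and $j(m)$ is nondecreasing. So $\bigcup_{m>M}\mathcal{S}_m\cap[1,x]$ lies in the two classes modulo $3^{j(M+1)}$, and hence has at most $2x/3^{j(M+1)}+2\ll x/\log x$ elements.
\end{enumerate}
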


\begin{corollary}\label{corollary}
The answer to Question \ref{question1} is affirmative.
\end{corollary}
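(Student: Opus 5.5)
Writing the rest of $\N$ as $\{2^k: k\in\N\}$ and putting $\mathscr{B}=\{2^k:k\ge 1\}$ (shifted if needed) is too thin, so I instead apply Theorem \ref{main-thm} with $\mathscr{B}=\{0\}\cup\{2^k:k\in\N\}$ and $\mathscr{S}=\mathcal{S}\cup\{0,1,\dots,n_1\}$. Here $\mathcal{S}$ is the set from Proposition \ref{lem1}, and $n_1$ is chosen so that every integer $>n_1$ has the form $2^k+s$ with $s\in\mathcal{S}$.

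\textbf{Checking the hypotheses.} Every $m\le n_1$ equals $0+m$ with $0\in\mathscr{B}$ and $m\in\mathscr{S}$. Every $m>n_1$ equals $2^k+s$ with $2^k\in\mathscr{B}$ and $s\in\mathscr{S}$. Hence $\mathscr{B}+\mathscr{S}=\N_0$. For the counting condition, $\mathscr{B}(x)\le \log x/\log 2+2$ and $\mathscr{S}(x)\ll x/\log x+n_1$, so $\mathscr{B}(x)\mathscr{S}(x)\ll x$ for $x\ge 2$. Small $x$ needs a little care: we want an inequality valid for all $N$ that arise, and since $N$ is large in the proof of Theorem \ref{main-thm}, this suffices.

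\textbf{Applying the theorem.} Theorem \ref{main-thm} gives $\mathcal{A}=\{2^b:b\in\mathscr{B}\}=\{1\}\cup\{2^{2^k}:k\ge1\}$ together with a set $\mathcal{M}$. Then $p(n,\mathcal{A},\mathcal{M})>0$ for every $n$, and $p$ has polynomial growth. The set $\mathcal{M}$ is infinite because $\mathscr{S}$ is infinite. Indeed, $\mathcal{S}$ must be infinite, since only $O(\log x)$ numbers up to $x$ have the form $2^k+s$ with $s$ taken from a finite set.

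\textbf{Growth condition.} Write the elements of $\mathcal{A}$ increasingly: $a_1=1$ and $a_n=2^{2^{n-1}}$ for $n\ge2$. Then
$$\log a_{n+1}-\log a_n=(2^n-2^{n-1})\log 2=2^{n-1}\log 2,$$
and so
$$\frac{\log a_{n+1}-\log a_n}{\log n}=\frac{2^{n-1}\log 2}{\log n}\to+\infty.$$
This is exactly the condition in Question \ref{question1}.

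\textbf{Main obstacle.} The only real subtlety is making sure $0$ is allowed in $\mathscr{B}$ and $\mathscr{S}$, so that the finitely many exceptional integers are covered. Theorem \ref{main-thm} takes subsets of $\N_0$, so this is permitted. I would also note that $1\in\mathcal{M}$, since $0\in\mathscr{S}$.
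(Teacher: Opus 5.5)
Your proposal is correct and follows essentially the same route as the paper. Like the paper, you take $\mathscr{B}=\{0\}\cup\{2^k\}$, pad Ruzsa's complement $\mathcal{S}$ with finitely many small integers so that $\mathscr{B}+\mathscr{S}=\N_0$, check $\mathscr{B}(x)\mathscr{S}(x)\ll x$ for $x\ge 2$, apply Theorem~\ref{main-thm}, and verify the lacunarity condition directly; the only difference is that you leave $2^0=1$ out of $\mathscr{B}$ (matching $k\in\N$ in Proposition~\ref{lem1}), which gives $a_n=2^{2^{n-1}}$ instead of the paper's $a_n=2^{2^{n-2}}$ for $n\ge 2$ and changes nothing else.
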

\begin{proof}
Let
$$
\mathcal{B}=\{0\}\cup\big\{2^k: k=0,1,2,\ldots\big\}.
$$
By Proposition \ref{lem1}, after adding finitely many nonnegative integers to $\mathcal{S}$ if necessary, there exists some $\mathcal{S}\subseteq \N_0$ such that $\mathcal{S}(x)\ll x/\log x$ and
\begin{align*}
\mathcal{B}+\mathcal{S}=\mathbb{N}_0.
\end{align*}
Indeed, finitely many missing values may be put into $\mathcal{S}$, since $0\in\mathcal{B}$, and this does not affect the estimate $\mathcal{S}(x)\ll x/\log x$. Since $\mathcal{B}(x)\ll\log x$ for $x\ge 2$, we have $\mathcal{B}(x)\mathcal{S}(x)\ll x$.

Let
$$
\mathcal{A}=\big\{2^b: b\in \mathcal{B}\big\}:=\{a_n\}_{n=1}^{\infty},
$$
where the elements of $\mathcal{A}$ are listed in increasing order. Then $a_1=1$ and $a_n=2^{2^{n-2}}$ for every $n\ge 2$. Hence
$$
\lim_{n\to\infty}\frac{\log a_{n+1}-\log a_n}{\log n}
=\lim_{n\to\infty}\frac{(2^{n-1}-2^{n-2})\log 2}{\log n}=+\infty.
$$
Put
$$
\mathcal{M}=\bigg\{\sum_{d\in \mathcal{D}} 2^{d}: \mathcal{D}\subset \mathcal{S},\ 0<|\mathcal{D}|<\infty\bigg\}.
$$
Moreover, $\mathcal{S}$ is infinite, otherwise $\mathcal{B}+\mathcal{S}$ would have counting function $O(\log x)$ and could not be all of $\N_0$. Hence $\mathcal{M}$ is infinite. By Theorem \ref{main-thm}, $p(n, \mathcal{A}, \mathcal{M})>0$ for every $n\in\N$ and $p$ has polynomial growth, completing the proof of Corollary \ref{corollary}.
\end{proof}

\section*{Use of generative artificial intelligence}

During the preparation of this work, the author used ChatGPT (OpenAI, GPT-5.5 Thinking) to explore possible approaches to Question \ref{question1}. In particular, ChatGPT suggested considering sets of the form $\mathcal{A}=2^{\mathcal{D}}$ and coefficients represented by binary subset sums, and pointed to a theorem of Ruzsa on additive complements of lacunary sequences. The author independently verified the applicability of the cited results, reorganized the proof and more general theorem, checked all mathematical details and references, and wrote the final manuscript. The author assumes full responsibility for the originality, accuracy, and integrity of the work.

\section*{Declaration of interests}
The author declares that there is no conflict of interest in this research.

\bibliographystyle{plain}

\begin{thebibliography}{abcdefghi}

\bibitem[Al12]{Al12}
N. Alon, Restricted integer partition functions, {\it Integers} {\bf 13} (2013), A16.

\bibitem[CW12]{CW12}
E. R. Canfield and H. S. Wilf, On the growth of restricted integer partition functions, {\it Dev. Math.} {\bf 23} (2012), 39--46.

\bibitem[DC16]{DC16}
L.-X. Dai and Y.-G. Chen, On two problems of Ljuji\'c and Nathanson, {\it C. R. Math. Acad. Sci. Paris} {\bf 354} (2016), 235--238.

\bibitem[LN12]{LN12}
Z. Ljuji\'c and M. Nathanson, On a partition problem of Canfield and Wilf, {\it Integers} {\bf 12A} (2012), A11.


\bibitem[Ru72]{Ru72}
I. Z. Ruzsa, On a problem of P. Erd\H{o}s, {\it Canad. Math. Bull.} {\bf 15} (1972), 309--310.

\end{thebibliography}

\end{document}